\theoremstyle{plain}
\newtheorem{thm}{Theorem}[section]
 \newtheorem{lem}[thm]{Lemma}
 \newtheorem{cor}[thm]{Corollary}
\theoremstyle{definition}
\newtheorem{rem}{\textnormal{\textbf{Remark}}}
\theoremstyle{remark}
\numberwithin{equation}{section}
\newcommand{\Mn}{M_{n}(\mathcal{R})}
\newcommand{\A}{\textsc{\textbf{a}}}
\newcommand{\B}{\textsc{\textbf{b}}}
\newcommand{\E}{\textsc{\textbf{e}}}
\newcommand{\F}{\textsc{\textbf{f}}}
\begin{document}
\title[Characterizing Jordan derivations of matrix rings]%
{Characterizing Jordan derivations of matrix rings through zero products }
\author[Hoger Ghahramani]%
{Hoger Ghahramani}

\thanks{{\scriptsize
\hskip -0.4 true cm \emph{MSC(2010)}: 16S50; 47B47; 47B49.
\newline \emph{Keywords}: Jordan derivation, Generalized Joradn derivation, Matrix ring.}}

\newcommand{\acr}{\newline\indent}

\address{Department of
Mathematics\acr University of Kurdistan\acr
                    P. O. Box 416\acr
                    Sanandaj\acr
                    Iran}
\email{h.ghahramani@uok.ac.ir; hoger.ghahramani@yahoo.com}

\thanks{}










\begin{abstract}
Let $\Mn$ be the ring of all $n \times n$ matrices over a unital
ring $\mathcal{R}$, let $\mathcal{M}$ be a $2$-torsion free unital
$\Mn$-bimodule and let $D:\Mn\rightarrow \mathcal{M}$ be an
additive map. We prove that if $D(\A)\B+ \A D(\B)+D(\B)\A+ \B
D(\A)=0$ whenever $\A,\B\in \Mn$ are such that $\A\B=\B\A=0$,
then $D(\A)=\delta(\A)+\A D(\textbf{1})$, where
$\delta:\Mn\rightarrow \mathcal{M}$ is a derivation and
$D(\textbf{1})$ lies in the centre of $\mathcal{M}$. It is also
shown that $D$ is a generalized derivation if and only if
$D(\A)\B+ \A D(\B)+D(\B)\A+ \B D(\A)-\A D(\textbf{1})\B-\B
D(\textbf{1})\A=0$ whenever $\A\B=\B\A=0$. We apply this results
to provide that any (generalized) Jordan derivation from $\Mn$
into a $2$-torsion free $\Mn$-bimodule (not necessarily unital)
is a (generalized) derivation. Also, we show that if
$\varphi:\Mn\rightarrow \Mn$ is an additive map satisfying
$\varphi(\A \B+\B \A)=\A\varphi(\B)+\varphi(\B)\A \quad (\A,\B
\in \Mn)$, then $\varphi(\A)=\A\varphi(\textbf{1})$ for all $\A\in
\Mn$, where $\varphi(\textbf{1})$ lies in the centre of $\Mn$. By
applying this result we obtain that every Jordan derivation of
the trivial extension of $\Mn$ by $\Mn$ is a derivation.
\end{abstract}

\maketitle

\section{Introduction}
Throughout this paper all rings are associative. Let
$\mathcal{A}$ be a unital ring and $\mathcal{M}$ be an
$\mathcal{A}$-bimodule. Recall that an additive map
$D:\mathcal{A}\rightarrow \mathcal{M}$ is said to be a
\emph{Jordan derivation} (or \emph{generalized Jordan derivation})
if $D(ab+ba)=D(a)b+aD(b)+D(b)a+bD(a)$ (or
$D(ab+ba)=D(a)b+aD(b)+D(b)a+bD(a)-aD(1)b-bD(1)a$) for all $a,b\in
\mathcal{A}$. It is called a \emph{derivation} (or
\emph{generalized derivation}) if $D(ab)=D(a)b+aD(b)$ (or
$D(ab)=D(a)b+aD(b)-aD(1)b$) for all $a,b\in \mathcal{A}$. Each map
$I_{m}:\mathcal{A}\rightarrow \mathcal{M}$ given by
$I_{m}(a)=am-ma$ ($m\in \mathcal{M}$) is a derivation which will
be called an \emph{inner derivation}. Clearly, each (generalized)
derivation is a (generalized) Jordan derivation. The converse is,
in general, not true.
\begin{rem}\label{R}
let $\mathcal{A}$ be a unital ring, $\mathcal{M}$ be an
$\mathcal{A}$-bimodule and $D:\mathcal{A}\rightarrow \mathcal{M}$
be an additive mapping. Then the following are equivalent:
\begin{enumerate}
\item[(i)] $D$ is a generalized derivation,
\item[(ii)] there is a derivation $\delta:\mathcal{A}\rightarrow
\mathcal{M}$ such that $D(a)=\delta(a)+aD(1)$ for $a\in
\mathcal{A}$.
\end{enumerate}
If (i) holds, define $\delta:\mathcal{A}\rightarrow \mathcal{M}$
by $\delta(a)=D(a)-aD(1)$. It is easily seen that $\delta$ is a
derivation, so $(ii)$ obtain. Conversely, if $(ii)$ holds we have
\begin{equation*}
\begin{split}
D(ab)&=\delta(ab)+abD(1)=\delta(a)b+a\delta(b)+abD(1)\\&
=(D(a)-aD(1))b+a(D(b)-bD(1))+abD(1)\\& =D(a)b+aD(b)-aD(1)b.
\end{split}
\end{equation*}
Thus $D$ is a generalized derivation.
\end{rem}
\par
The question under what conditions a map becomes a (generalized
or Jordan) derivation attracted much attention of mathematicians.
Herstein\cite{Her} proved that every Jordan derivation from a
$2$-torsion free prime ring into itself is a derivation.
Bre$\check{\textrm{s}}$ar \cite{Bre} showed that every Jordan
derivation from a $2$-torsion free semiprime ring into itself is
a derivation. By a classical result of Jacobson and Rickart [6]
every Jordan derivation on a full matrix ring over a $2$-torsion
free unital ring is a derivation and Alizadeh in \cite{Ali}
generalized this result. For more studies concerning Jordan
derivations we refer the reader to \cite{Ben, Fei, Gh, Jac, Li,
Sin, Zh, Zh2} and the references therein. Also, there have been a
number of papers concerning the study of conditions under which
(generalized or Jordan) derivations of rings can be completely
determined by the action on some sets of points \cite{Ala1, Ala2,
Ala3, Che, Gha, Jia, Jing, Jing2, Zhu, Zhu2}.
\par
In this paper, following \cite{Ala3}, we consider the subsequent
condition on an additive map $D$ from a ring $\mathcal{A}$ into
an $\mathcal{A}$-bimodule $\mathcal{M}$:
\[ a, b \in \mathcal{A}, \quad ab=ba=0\Rightarrow
D(a)b+ aD(b)+D(b)a+ bD(a)=0. \quad \quad (\ast)
\]
Our purpose is to investigate whether the condition $(\ast)$
characterizes Jordan derivations. A similar question is concerned
with generalized Jordan derivations. So we consider the following
condition on an additive map $D:\mathcal{A}\rightarrow
\mathcal{M}$ to the context of generalized Jordan derivations,
where $\mathcal{A}$ is unital and $\mathcal{M}$ is unital
$\mathcal{A}$-bimodule:
\[ab=ba=0\Rightarrow
D(a)b+ aD(b)+D(b)a+ bD(a)-aD(1)b-bD(1)a=0. \quad \quad (\ast \ast)
\]
In Section 2 we prove that, in the case when $\mathcal{A}$ is a
full matrix ring $\Mn$ over a unital ring $\mathcal{R}$ and
$\mathcal{M}$ is a $2$-torsion free unital $\Mn$-bimodule,
conditions $(\ast)$ and $(\ast\ast)$ imply that $D$ is of the form
$D(\A)=\delta(\A)+\A D(\textbf{1})$ for each $\A\in \Mn$, where
$\delta: \Mn\rightarrow \mathcal{M}$ is a derivation and
$\textbf{1}$ is the identity matrix. In the case $(*)$ we have
$D(\textbf{1})\in Z(\mathcal{M})$, where $Z(\mathcal{M})$ is the
centre of $\mathcal{M}$. In section 3 our previous results are
applied to characterize (generalized) Jordan derivations from
$\Mn$ into a $2$-torsion free $\Mn$-bimodule $\mathcal{M}$ which
is not necessarily a unital $\Mn$-bimodule. Indeed, we show that
each (generalized) Jordan derivation from $\Mn$ into
$\mathcal{M}$ is a (generalized) derivation. This generalizes the
main result of \cite{Ali}. In section 4 we get some related
results. In particular, by applying results from section 2 we
obtain that if $\varphi:\Mn\rightarrow \Mn$ is an additive map
satisfying $\varphi(\A \B+\B \A)=\A\varphi(\B)+\varphi(\B)\A
\quad (\A,\B \in \Mn)$, then $\varphi(\A)=\A\varphi(\textbf{1})$
for all $\A\in \Mn$, where $\varphi(\textbf{1})\in Z(\Mn)$. As
applications of the above results, we show that every Jordan
derivation of the trivial extension of $\Mn$ by $\Mn$ is a
derivation.
\begin{rem}
Each of the following conditions on an additive map $D:
\mathcal{A}\rightarrow \mathcal{M}$ implies $(\ast)$, which have
been considered by a number of authors (see, for
instance,\cite{Jia, Zha}):
\begin{equation*}
\begin{split}
&a, b \in \mathcal{A}, \quad ab+ba=0\Rightarrow D(a)b+aD(b)+D(b)a+
bD(a)=0.\\&
 a, b \in \mathcal{A}, \quad ab=0\Rightarrow D(a)b+
aD(b)+D(b)a+ bD(a)=D(ab+ba).
\end{split}
\end{equation*}
Therefore, Theorem~\ref{asli} still holds with each of the above
conditions replaced by $(\ast)$.
\end{rem}
The following notations will be used in this paper.
\par
We shall denote the elements of $\Mn$ by bold letters and the
identity matrix by $\textbf{1}$. Also, $\E_{ij}$ for $1\leq i,j
\leq n$ is the matrix unit, $a\E_{ij}$ is the matrix whose
$(ij)$th entry is $a$ and zero elsewhere, where $a\in \mathcal{R}$
and $1\leq i,j \leq n$, and $a_{i,j}$ is the $(ij)$th entry of
$\A \in \Mn$.

\section{Characterizing Jordan derivations through zero products}
From this point up to the last section $\Mn$, for $n\geq 2$, is
the ring of all $n\times n$ matrices over a unital ring
$\mathcal{R}$ and $\mathcal{M}$ is a $2$-torsion free unital
$\Mn$-bimodule. In this section, we discuss the additive maps
from $\Mn$ into $\mathcal{M}$ satisfying $(\ast)$.
\begin{thm}\label{asli}
Let $D:\Mn \rightarrow \mathcal{M}$ be an additive map satisfying
\[ \A,\B \in \Mn, \quad \A\B=\B\A=0\Rightarrow
D(\A)\B+\A D(\B)+D(\B)\A+\B D(\A)=0. \] Then there exist a
derivation $\delta: \Mn \rightarrow \mathcal{M}$ such that
$D(\A)=\delta(\A)+\A D(\textbf{1})$ for each $\A\in \Mn$ and
$D(\textbf{1})\in Z(\mathcal{M})$.
\end{thm}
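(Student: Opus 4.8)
The plan is to exploit the matrix-unit structure of $\Mn$ to extract information from the hypothesis, and then assemble a derivation out of $D$. Since $\E_{ii}$ and $\textbf{1}-\E_{ii}$ are complementary idempotents with product zero in both orders, and more generally $\E_{ij}$ and $\textbf{1}-\E_{ii}-\E_{jj}$ (for $i\neq j$) multiply to zero, the hypothesis applied to such pairs will produce a system of relations among the values $D(\E_{ij})$ and, after adding up, among $D(a\E_{ij})$ for $a\in\mathcal{R}$. First I would record the Peirce-type decomposition of $\mathcal{M}$ relative to the idempotents $\E_{ii}$, writing $\mathcal{M}=\bigoplus_{i,j}\mathcal{M}_{ij}$ with $\mathcal{M}_{ij}=\E_{ii}\mathcal{M}\E_{jj}$, and use the $2$-torsion freeness to split components cleanly. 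Then I would feed the pairs $(\E_{ij},\E_{ji})$, $(\E_{ij},\textbf{1}-\E_{ii}-\E_{jj})$, $(a\E_{ij},\E_{jk})$ and the like into $(\ast)$ to pin down how $D$ acts on each $\E_{ij}$ and on each $a\E_{ij}$, always keeping the correction term $\A D(\textbf{1})$ in view.

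Next I would isolate $D(\textbf{1})$. Taking $\A=\E_{ii}$, $\B=\textbf{1}-\E_{ii}$ in $(\ast)$ gives a relation tying $D(\E_{ii})$, $D(\textbf{1}-\E_{ii})$ and $D(\textbf{1})$ together; combined with the Peirce decomposition this forces the off-diagonal parts of $D(\textbf{1})$ to vanish and the diagonal parts to coincide, i.e. $D(\textbf{1})$ is "scalar" across the Peirce blocks. A further round, commuting $D(\textbf{1})$ past each $\E_{ij}$ (again obtained from $(\ast)$ with suitable zero-product pairs and from the additivity trick $\E_{ij}=\E_{ij}\cdot$something), yields $\A D(\textbf{1})=D(\textbf{1})\A$ for all matrix units, hence for all $\A$, so $D(\textbf{1})\in Z(\mathcal{M})$. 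At that point I set $\delta(\A):=D(\A)-\A D(\textbf{1})$; this $\delta$ is additive, satisfies $\delta(\textbf{1})=0$, and — because $D(\textbf{1})$ is central — still satisfies the same zero-product identity $(\ast)$ that $D$ does. So it suffices to prove the theorem in the normalized case $D(\textbf{1})=0$, where the claim becomes: $D$ is a derivation.

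In the normalized case the bulk of the work is to show the two "multiplicative" identities $D(\E_{ij}\A)=D(\E_{ij})\A+\E_{ij}D(\A)$ and $D(\A\E_{ij})=D(\A)\E_{ij}+\A D(\E_{ij})$ for all $i,j$ and all $\A$, since once $D$ behaves like a derivation against every matrix unit on both sides, writing $\A\B=\sum_{i,j}\E_{ii}\A\B\E_{jj}=\sum_{i}\E_{ii}\A\sum_{k}\E_{kk}\B$ and expanding lets us bootstrap to $D(\A\B)=D(\A)\B+\A D(\B)$ for arbitrary $\A,\B$. To get those identities I would run $(\ast)$ on cleverly chosen complementary pairs: for instance, for fixed $i\neq j$ and any $\A$ whose $j$th column is the only nonzero one, $\E_{ij}$ and $(\textbf{1}-\E_{jj})$-type matrices against the relevant pieces of $\A$ have zero products, and the resulting equations (after using torsion-freeness and the earlier-established values $D(a\E_{ij})$) collapse to the desired Leibniz relations; the diagonal units $\E_{ii}$ require treating the pairs $(\E_{ii},\E_{kk})$, $k\neq i$, and $(\E_{ii},\textbf{1}-\E_{ii})$ separately and recombining. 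The main obstacle I anticipate is precisely this last bookkeeping: $(\ast)$ only fires on \emph{orthogonal} pairs, so every Leibniz identity must be reached indirectly by splitting $\A$ into Peirce components, applying $(\ast)$ to each component against a suitable orthogonal complement, and then reassembling — and one must be vigilant that the additive map $D$ is not assumed to respect the scalar action of $\mathcal{R}$, so relations like $D((a+b)\E_{ij})=D(a\E_{ij})+D(b\E_{ij})$ are all that is available and every step has to be engineered to use only additivity plus $2$-torsion freeness.
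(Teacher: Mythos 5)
Your overall strategy --- Peirce-decompose $\mathcal{M}$ with respect to the diagonal idempotents, feed orthogonal pairs into $(\ast)$, prove $D(\textbf{1})\in Z(\mathcal{M})$, subtract $\A D(\textbf{1})$, and verify the Leibniz rule block by block --- is essentially the paper's (which, for economy, uses only the two complementary idempotents $\E=\E_{11}$ and $\F=\textbf{1}-\E_{11}$ rather than the full $n$-fold decomposition). But as written there are two genuine gaps. First, $(\ast)$ cannot ``pin down how $D$ acts on each $\E_{ij}$'': the components $\E_{ii}D(\E_{ii})\E_{jj}$ for $i\neq j$ are not determined by the hypothesis and need not vanish --- they are exactly the inner-derivation part of the answer. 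The paper therefore first replaces $D$ by $\Delta=D-I_{\textit{\textbf{m}}}$ with $\textit{\textbf{m}}=\E D(\E)\F-\F D(\E)\E$, so that $\E\Delta(\E)\F=\F\Delta(\E)\E=0$; without some such normalization the identities you intend to extract are false for $D$ itself, and all later computations must carry these uncontrolled terms. Second, and more seriously, the decisive step is not addressed at all: one must show that the ``transposed-corner'' components vanish, i.e.\ $\F\Delta(\E\A\F)\E=0$ and $\E\Delta(\F\A\E)\F=0$ (in your notation, that the image of the $(i,j)$ Peirce block of $\Mn$ has no $\mathcal{M}_{ji}$ component). Orthogonal pairs such as $(\E\A\F,\E\B\F)$ only yield a symmetrized relation in which the two unknown corner terms appear together; the paper must combine that relation with a second identity coming from a different orthogonal pair to produce $\F\Delta(a\E_{1j})\E_{1j}=-\F\Delta(a\E_{1j})\E_{1j}$, and only then does $2$-torsion freeness kill the term. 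This is precisely where the full matrix-ring structure (and not merely the existence of a nontrivial idempotent) enters, and ``the resulting equations collapse to the desired Leibniz relations'' does not supply it.

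A smaller slip: concluding $\A D(\textbf{1})=D(\textbf{1})\A$ ``for all matrix units, hence for all $\A$'' is not legitimate, since $D$ is only additive and the matrix units do not additively generate $\Mn$; the commutation relation must be established for the general Peirce components $\E_{ii}\A\E_{jj}$ of an arbitrary $\A$, as in Step 7 of the paper. On the other hand, your observation that once $D(\textbf{1})$ is known to be central the map $\delta(\A)=D(\A)-\A D(\textbf{1})$ again satisfies $(\ast)$ with $\delta(\textbf{1})=0$ is correct, and reducing to the normalized case in this way is a harmless reorganization of the paper's argument (the paper instead carries the $\Delta(\textbf{1})$ terms through Steps 4--8 and subtracts them at the end).
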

\begin{proof}
Set $\E=\E_{11}$ and
$\F=\textbf{1}-\E_{11}=\sum_{j=2}^{n}\E_{jj}$. Then $\E$ and $\F$
are nontrivial idempotents such that $\E+\F=\textbf{1}$ and \E \F
=\F \E =0. Let $\textit{\textbf{m}}=\E D(\E)\F-\F D(\E)\E$.
Define $\Delta:\Mn \rightarrow \mathcal{M}$ by
$\Delta(\A)=D(\A)-I_{\textit{\textbf{m}}}(\A)$. Then $\Delta$ is
an additive mapping which satisfies $(\ast)$. Moreover $\E
\Delta(\E)\F=\F \Delta(\E)\E=0$.
\par
We complete the proof by checking some steps.
\\
\textbf{Step 1.} $\Delta(\E\A\E)=\E\Delta(\E\A\E)\E$ and
$\Delta(\F\A\F )=\F\Delta(\F\A\F)\F$ for all $\A \in \Mn$.
\\ \par
Let $\A \in \Mn$. Since $\E(\F \A \F )=(\F \A \F)\E=0$, we have
\begin{equation}\label{1}
\Delta(\E)\F \A \F +\E \Delta(\F \A \F)+\Delta(\F \A \F )\E +\F
\A \F \Delta(\E)=0.
\end{equation}
Multiplying this identity by $\E$ both on the left and on the
right we see that $2\E \Delta(\F \A \F)\E=0$ so $\E
\Delta(\F\A\F)\E=0$. Now, multiplying the Equation\eqref{1} from
the left by $\E$, from the right by $\F$ and by the fact that
$\E\Delta(\E)\F=0$, we get $\E\Delta(\F\A\F)\F=0$. Similarly, from
Equation\eqref{1} and the fact that $\F\Delta(\E)\E=0$, we see
that $\F\Delta(\F\A\F)\E=0$. Therefore, from above equations we
arrive at
\begin{equation*}
\Delta(\F\A\F )=\F\Delta(\F\A\F)\F
\end{equation*}
We have $(\E\A\E)\F=\F(\E\A\E)=0$. Thus
\begin{equation}\label{3}
\Delta(\E\A\E)\F+\E\A\E\Delta(\F)+\Delta(\F)(\E\A\E)+\F\Delta(\E\A\E)=0.
\end{equation}
By $\Delta(\F\A\F )=\F\Delta(\F\A\F)\F$, Equation\eqref{3} and
using similar methods as above we obtain
\begin{equation*}
\Delta(\E\A\E)=\E\Delta(\E\A\E)\E.
\end{equation*}
\textbf{Step 2.} $\Delta(\E\A\F)=\E\Delta(\E\A\F)\F$ for all $\A
\in \Mn$.
\\ \par
Let $\A, \B \in \Mn$. Since $(\E\A\F)(\E\B\F)=(\E\B\F)(\E\A\F)=0$
we have
\begin{equation}\label{5}
\Delta(\E\A\F)\E\B\F+\E\A\F\Delta(\E\B\F)+\Delta(\E\B\F)\E\A\F+\E\B\F\Delta(\E\A\F)=0.
\end{equation}
Multiplying Equation\eqref{5} by $\E$ both on the left and on the
right, we get
\begin{equation}\label{51}
\E\A\F\Delta(\E\B\F)\E+\E\B\F\Delta(\E\A\F)\E=0.
\end{equation}
Similarly, multiplying Equation\eqref{5} by $\F$ both on the left
and on the right, we find
\begin{equation}\label{52}
\F\Delta(\E\A\F)\E\B\F+\F\Delta(\E\B\F)\E\A\F=0.
\end{equation}
We have
$(\E\A\E+\E\A\E\B\F)(\F-\E\B\F)=(\F-\E\B\F)(\E\A\E+\E\A\E\B\F)=0$
and so
\begin{equation}\label{53}
\begin{split}
&\Delta(\E\A\E+\E\A\E\B\F)(\F-\E\B\F)+(\E\A\E+\E\A\E\B\F)\Delta(\F-\E\B\F)\\&+\Delta(\F-\E\B\F)(\E\A\E+\E\A\E\B\F)+(\F-\E\B\F)\Delta(\E\A\E+\E\A\E\B\F)=0.
\end{split}
\end{equation}
Multiplying Equation\eqref{53} by $\E$ both on the left and on the
right and replacing $\A$ by $\E$, from Step 1 and
Equation\eqref{51}, we get $\E\Delta(\E\B\F)\E=0$. Now
multiplying Equation\eqref{53} by $\F$ both on the left and on
the right, by Equation\eqref{52} and a similar arguments as above
we find $\F\Delta(\E\B\F)\F=0$.
\par
Multiplying Equation\eqref{53} by $\F$ on the left and by $\E$ on
the right. By Step 1, we arrive at
\begin{equation}\label{54}
\F\Delta(\E\A\E\B\F)\E=\F\Delta(\E\B\F)\E\A\E.
\end{equation}
For any $\A\in \Mn$ and $2\leq j \leq n$, let $\E_{11}\A
\E_{jj}=a\E_{1j}$. By Equation\eqref{52} we have
\begin{equation*}
\begin{split}
\F\Delta(a\E_{1j})\E_{1j}&=\F\Delta(\E(a\E_{1j})\F)\E\E_{1j}\F
=-\F\Delta(\E\E_{1j}\F)\E(a\E_{1j})\F\\&
=-\F\Delta(\E\E_{1j}\F)\E (a\E_{11})\E\E_{1j}.
\end{split}
\end{equation*}
Also from Equation\eqref{54} we see that
\begin{equation*}
\F\Delta(\E\E_{1j}\F)\E (a\E_{11})\E\E_{1j}= \F\Delta(\E
a\E_{11}\E\E_{1j}\F)\E\E_{1j}=\F\Delta(a\E_{1j})\E_{1j}.
\end{equation*}
 So $\F\Delta(a\E_{1j})\E_{1j}=-\F\Delta(a\E_{1j})\E_{1j}$ and
 hence
$\F\Delta(a\E_{1j})\E_{1j}=0$. Multiplying this identity on the
right by $\E_{j1}$, we get $\F\Delta(a\E_{1j})\E=0$. Therefore
$\F\Delta(\E_{11}\A \E_{jj})\E=\F\Delta(a\E_{1j})\E=0$. So
\[
\F\Delta(\E\A\F)\E=\F\Delta(\sum_{j=2}^{n}\E_{11}\A\E_{jj})\E=\sum_{j=2}^{n}\F\Delta(\E_{11}\A\E_{jj})\E=0.\]
Now from previous equations it follows that
\begin{equation*}
\Delta(\E\A\F)=\E\Delta(\E\A\F)\F.
\end{equation*}
\textbf{Step 3.} $\Delta(\F\A\E)=\F\Delta(\F\A\E)\E$ for all $\A
\in \Mn$.
\\ \par
Let $\A, \B \in \Mn$. Applying $\Delta$ to
$(\F\A\E)(\F\B\E)=(\F\B\E)(\F\A\E)=0$, we get
\begin{equation}\label{61}
\Delta(\F\A\E)\F\B\E+\F\A\E\Delta(\F\B\E)+\Delta(\F\B\E)\F\A\E+\F\B\E\Delta(\F\A\E)=0.
\end{equation}
Multiplying Equation\eqref{61} by $\E$ both on the left and on
the right, we get
\begin{equation}\label{62}
\E\Delta(\F\A\E)\F\B\E+\E\Delta(\F\B\E)\F\A\E=0.
\end{equation}
Similarly, multiplying Equation\eqref{61} by $\F$ both on the
left and on the right, we have
\begin{equation}\label{63}
\F\A\E\Delta(\F\B\E)\F+\F\B\E\Delta(\F\A\E)\F=0.
\end{equation}
We have
$(\F+\F\A\E)(\F\A\E\B\E-\E\B\E)=(\F\A\E\B\E-\E\B\E)(\F+\F\A\E)=0$
and so
\begin{equation}\label{64}
\begin{split}
&\Delta(\F+\F\A\E)(\F\A\E\B\E-\E\B\E)+(\F+\F\A\E)\Delta(\F\A\E\B\E-\E\B\E)\\&+\Delta(\F\A\E\B\E-\E\B\E)(\F+\F\A\E)+(\F\A\E\B\E-\E\B\E)\Delta(\F+\F\A\E)=0.
\end{split}
\end{equation}
Multiplying Equation\eqref{64} by $\E$ both on the left and on the
right and replacing $\B$ by $\E$, from Step 1 and
Equation\eqref{62}, we get $\E\Delta(\F\A\E)\E=0$. Now
multiplying Equation\eqref{64} by $\F$ both on the left and on
the right, by Equation\eqref{63} and a similar arguments as above
we find $\F\Delta(\F\A\E)\F=0$.
\par
Multiplying Equation\eqref{64} by $\E$ on the left and by $\F$ on
the right. By Step 1, we arrive at
\begin{equation}\label{65}
\E\Delta(\F\A\E\B\E)\F=\E\B\E\Delta(\F\A\E)\F.
\end{equation}
For any $\A\in \Mn$ and $2\leq j \leq n$, let $\E_{jj}\A
\E_{11}=a\E_{j1}$. By Equation\eqref{63} we have
\begin{equation*}
\begin{split}
\E_{j1}\Delta(a\E_{j1})\F&=\F\E_{j1}\E\Delta(\F(a\E_{j1})\E)\F
=-\F(a\E_{j1})\E\Delta(\F\E_{j1}\E)\F\\&
=-\E_{j1}\E(a\E_{11})\E\Delta(\F\E_{j1}\E)\F.
\end{split}
\end{equation*}
Also from Equation\eqref{65} we see that
\begin{equation*}
\E_{j1}\E(a\E_{11})\E\Delta(\F\E_{j1}\E)\F =
\E_{j1}\E\Delta(\F\E_{j1}\E(a\E_{11})\E)\F=\E_{j1}\Delta(a\E_{j1})\F.
\end{equation*}
 So $\E_{j1}\Delta(a\E_{j1})\F=-\E_{j1}\Delta(a\E_{j1})\F$ and
 hence
$\E_{j1}\Delta(a\E_{j1})\F=0$. Therefore
$$\E\Delta(\E_{jj}\A\E_{11})\F=\E\Delta(a\E_{j1})\F=0.$$
So
\[
\E\Delta(\F\A\E)\F=\E\Delta(\sum_{j=2}^{n}\E_{jj}\A\E_{11})\F=\sum_{j=2}^{n}\E\Delta(\E_{jj}\A\E_{11})\F=0.\]
Now from previous equations it follows that
\begin{equation*}
\Delta(\F\A\E)=\F\Delta(\F\A\E)\E.
\end{equation*}
\textbf{Step 4.}
\par
$\E\Delta(\E\A\E\B\F)\F=\E\A\E\Delta(\E\B\F)\F+\E\Delta(\E\A\E)\E\B\F-\E\A\E\B\F\Delta(\F)\F$
\\
and
\par
$\E\Delta(\E\A\F\B\F)\F=\E\Delta(\E\A\F)\F\B\F+\E\A\F\Delta(\F\B\F)\F-\E\A\F\Delta(\F)\F\B\F$
\\
for all $\A, \B \in \Mn$.
\\ \par
Let $\A, \B \in \Mn$. Multiplying Equation\eqref{53} by $\E$ on
the left and by $\F$ on the right, from Step 1 and 2 we obtain
\begin{equation*}
\E\Delta(\E\A\E\B\F)\F=\E\A\E\Delta(\E\B\F)\F+\E\Delta(\E\A\E)\E\B\F-\E\A\E\B\F\Delta(\F)\F.
\end{equation*}
Replacing $\A$ by $\E$ in above equation, we get
\begin{equation}\label{71}
\E\Delta(\E)\E\B\F=\E\B\F\Delta(\F)\F
\end{equation}
Since
$(\E+\E\A\F)(\F\B\F-\E\A\F\B\F)=(\F\B\F-\E\A\F\B\F)(\E+\E\A\F)=0$,
we have
\begin{equation*}
\begin{split}
&\Delta(\E+\E\A\F)(\F\B\F-\E\A\F\B\F)+(\E+\E\A\F)\Delta(\F\B\F-\E\A\F\B\F)\\&+\Delta(\F\B\F-\E\A\F\B\F)(\E+\E\A\F)+(\F\B\F-\E\A\F\B\F)\Delta(\E+\E\A\F)=0
\end{split}
\end{equation*}
Multiplying this identity by $\E$ on the left and by $\F$ on the
right, from Equation\eqref{71} and Step 1 and 2 we arrive at
\begin{equation*}
\E\Delta(\E\A\F\B\F)\F=\E\Delta(\E\A\F)\F\B\F+\E\A\F\Delta(\F\B\F)\F-\E\A\F\Delta(\F)\F\B\F.
\end{equation*}
\textbf{Step 5.}
\par
$\F\Delta(\F\A\E\B\E)\E=\F\Delta(\F\A\E)\E\B\E+\F\A\E\Delta(\E\B\E)\E-\F\Delta(\F)\F\A\E\B\E$
\\ and
\par
$\F\Delta(\F\A\F\B\E)\E=\F\A\F\Delta(\F\B\E)\E+\F\Delta(\F\A\F)\F\B\E-\F\A\F\Delta(\F)\F\B\E$
\\
for all $\A, \B \in \Mn$.
\\ \par
Let $\A, \B \in \Mn$. Multiplying Equation\eqref{64} by $\F$ on
the left and by $\E$ on the right, from Step 1 and 3 we obtain
\begin{equation*}
\F\Delta(\F\A\E\B\E)\E=\F\Delta(\F\A\E)\E\B\E+\F\A\E\Delta(\E\B\E)\E-\F\Delta(\F)\F\A\E\B\E
\end{equation*}
Replacing $\B$ by $\E$ in above equation, we get
\begin{equation}\label{81}
\F\A\E\Delta(\E)\E=\F\Delta(\F)\F\A\E
\end{equation}
Since
$(\E-\F\B\E)(\F\A\F\B\E+\F\A\F)=(\F\A\F\B\E+\F\A\F)(\E-\F\B\E)=0$,
we have
\begin{equation*}\label{82}
\begin{split}
&\Delta(\E-\F\B\E)(\F\A\F\B\E+\F\A\F)+(\E-\F\B\E)\Delta(\F\A\F\B\E+\F\A\F)\\&+\Delta(\F\A\F\B\E+\F\A\F)(\E-\F\B\E)+(\F\A\F\B\E+\F\A\F)\Delta(\E-\F\B\E)=0
\end{split}
\end{equation*}
Multiplying this identity by $\F$ on the left and by $\E$ on the
right, from Equation\eqref{81} and Step 1 and 3 we arrive at
\begin{equation*}
\F\Delta(\F\A\F\B\E)\E=\F\A\F\Delta(\F\B\E)\E+\F\Delta(\F\A\F)\F\B\E-\F\A\F\Delta(\F)\F\B\E.
\end{equation*}
\textbf{Step 6.}
\par
$\E\Delta(\E\A\E\B\E)\E=\E\A\E\Delta(\E\B\E)\E+\E\Delta(\E\A\E)\E\B\E-\E\A\E\Delta(\E)\E\B\E$
\\ and
\par
$\F\Delta(\F\A\F\B\F)\F=\F\Delta(\F\A\F)\F\B\F+\F\A\F\Delta(\F\B\F)\F-\F\A\F\Delta(\F)\F\B\F$
\\
for all $\A, \B \in \Mn$.
\\ \par
Let $\A, \B\in \Mn$. For $2\leq j \leq n$, we have
$\E_{1j}=\E\E_{1j}\F $, so from Step 4 we see that
\[
\E\Delta(\E\A\E\B\E_{1j})\F=\E\A\E\B\E\Delta(\E_{1j})\F+\E\Delta(\E\A\E\B\E)\E_{1j}-\E\A\E\B\E_{1j}\Delta(\F)\F.\]
On the other hand,
\begin{equation*}
\begin{split}
\E\Delta(\E\A\E\B\E_{1j})\F&=
\E\A\E\Delta(\E\B\E_{1j})\F+\E\Delta(\E\A\E)\E\B\E_{1j}-\E\A\E\B\E_{1j}\Delta(\F)\F\\&
=\E\A\E\B\E\Delta(\E_{1j})\F+\E\A\E\Delta(\E\B\E)\E_{1j}
-\E\A\E\B\E_{1j}\Delta(\F)\F\\&+\E\Delta(\E\A\E)\E\B\E_{1j}-\E\A\E\B\E_{1j}\Delta(\F)\F.
\end{split}
\end{equation*}
By comparing the two expressions for
$\E\Delta(\E\A\E\B\E_{1j})\F$, Equation\eqref{71} and multiplying
the resulting equation by $\E_{j1}$ on the right, yields
\begin{equation*}
\E\Delta(\E\A\E\B\E)\E=\E\A\E\Delta(\E\B\E)\E+\E\Delta(\E\A\E)\E\B\E-\E\A\E\Delta(\E)\E\B\E.
\end{equation*}
We have $\E_{1j}=\E\E_{1j}\F$ for $2\leq j \leq n$, so from Step
5 and a proof similar to above, we find
\begin{equation*}
\E_{1j}\Delta(\F\A\F\B\F)\F=\E_{1j}\Delta(\F\A\F)\F\B\F+\E_{1j}\A\F\Delta(\F\B\F)\F-\E_{1j}\A\F\Delta(\F)\F\B\F.
\end{equation*}
Multiplying this identity from left by $\E_{j1}$ we get
\begin{equation*}
\E_{jj}\Delta(\F\A\F\B\F)\F=\E_{jj}\Delta(\F\A\F)\F\B\F+\E_{jj}\A\F\Delta(\F\B\F)\F-\E_{jj}\A\F\Delta(\F)\F\B\F.
\end{equation*}
So
\begin{equation*}
\begin{split}
\F\Delta(\F\A\F\B\F)\F&=\sum_{j=2}^{n}\E_{jj}\Delta(\F\A\F\B\F)\F\\&
=\sum_{j=2}^{n}(\E_{jj}\Delta(\F\A\F)\F\B\F+\E_{jj}\A\F\Delta(\F\B\F)\F-\E_{jj}\A\F\Delta(\F)\F\B\F\\&
=\F\Delta(\F\A\F)\F\B\F+\F\A\F\Delta(\F\B\F)\F-\F\A\F\Delta(\F)\F\B\F.
\end{split}
\end{equation*}
\textbf{Step 7.} $\A\Delta(\textbf{1})=\Delta(\textbf{1})\A$ for
all $\A \in \Mn$.
\\ \par
Let $\A \in \Mn$. By Equation\eqref{71} we have
\begin{equation*}
\begin{split}
&\E\A\E\Delta(\E)\E_{1j}=\E\A\E_{1j}\Delta(\F)\F=\E\Delta(\E)\E\A\E_{1j}
\\& \textrm{and}
\\&
\E_{1j}\Delta(\F)\F\A\F=\E\Delta(\E)\E_{1j}\A\F=\E_{1j}\A\F\Delta(\F)\F
\end{split}
\end{equation*}
for $2\leq j \leq n$. So
\begin{equation}\label{101}
\begin{split}
&\E\A\E\Delta(\E)\E=\E\Delta(\E)\E\A\E, \quad
\E_{jj}\Delta(\F)\F\A\F=\E_{jj}\A\F\Delta(\F)\F
\\& \textrm{and}\,\, \textrm{hence}
\\&
\F\Delta(\F)\F\A\F=\sum_{j=2}^{n}\E_{jj}\Delta(\F)\F\A\F=\sum_{j=2}^{n}\E_{jj}\A\F\Delta(\F)\F=\F\A\F\Delta(\F)\F.
\end{split}
\end{equation}
By Step 1 we have
$\Delta(\textbf{1})=\E\Delta(\E)\E+\F\Delta(\F)\F$. From this
identity and Equations\eqref{71}, \eqref{81}, \eqref{101} we
arrive at
\begin{equation*}
\begin{split}
\A\Delta(\textbf{1})&=\E\A\E\Delta(\textbf{1})+\E\A\F\Delta(\textbf{1})+\F\A\E\Delta(\textbf{1})+\F\A\F\Delta(\textbf{1})
\\&=\E\A\E\Delta(\E)\E+\E\A\F\Delta(\F)\F+\F\A\E\Delta(\E)\E+\F\A\F\Delta(\F)\F
\\&=\E\Delta(\E)\E\A\E+\E\Delta(\E)\E\A\F+\F\Delta(\F)\F\A\E+\F\Delta(\F)\F\A\F
\\&=\Delta(\textbf{1})\E\A\E+\Delta(\textbf{1})\E\A\F+\Delta(\textbf{1})\F\A\E+\Delta(\textbf{1})\F\A\F
\\&=\Delta(\textbf{1})\A.
\end{split}
\end{equation*}
\textbf{Step 8.}
\par
$\E\Delta(\E\A\F\B\E)\E=\E\Delta(\E\A\F)\F\B\E+\E\A\F\Delta(\F\B\E)\E-\E\A\F\B\E\Delta(\E)\E$
\\ and
\par
$\F\Delta(\F\B\E\A\F)\F=\F\Delta(\F\B\E)\E\A\F+\F\B\E\Delta(\E\A\F)\F-\F\Delta(\F)\F\B\E\A\F$
\\
for all $\A, \B \in \Mn$.
\\ \par
Let $\A, \B\in \Mn$. By applying $\Delta$ to
\begin{equation}\label{201}
\begin{split}
(&\E\A\F\B\E+\E\A\F-\F\B\E-\F)(-\E-\E\A\F+\F\B\E+\F\B\E\A\F)\\&=(-\E-\E\A\F+\F\B\E+\F\B\E\A\F)(\E\A\F\B\E+\E\A\F-\F\B\E-\F)=0
\end{split}
\end{equation}
 and multiplying the resulting equation by $\E$ both on the left and on the
right, from Steps 1--3 and Equations\eqref{101} we deduce that
$$\E\Delta(\E\A\F\B\E)\E=\E\Delta(\E\A\F)\F\B\E+\E\A\F\Delta(\F\B\E)\E-\E\A\F\B\E\Delta(\E)\E.$$
Also by applying $\Delta$ to \eqref{201} and multiplying the
resulting equation by $\F$ both on the left and on the right, from
Steps 1--3 and Equations\eqref{101} we get
$$\F\Delta(\F\B\E\A\F)\F=\F\Delta(\F\B\E)\E\A\F+\F\B\E\Delta(\E\A\F)\F-\F\Delta(\F)\F\B\E\A\F.$$
\par We have $D(\textbf{1})=\Delta(\textbf{1})$ and hence from Step
7 we find that $D(\textbf{1})\in Z(\mathcal{M})$. Since
$\A\B=\E\A\E\B\E+\E\A\E\B\F+\E\A\F\B\E+\E\A\F\B\F+\F\A\E\B\E+\F\A\E\B\F+\F\A\F\B\E+\F\A\F\B\F$
for any $\A,\B \in \Mn$, by Steps 1--8, it follows that the
mapping $d:\Mn\rightarrow \mathcal{M}$ given by
$d(\A)=\Delta(\A)-\A\Delta(\textbf{1})$ is a derivation. So the
mapping $\delta:\Mn\rightarrow \mathcal{M}$ given by
$\delta(\A)=d(\A)+I_{\textit{\textbf{m}}}(\A)$ is a derivation
and we have $D(\A)=\delta(\A)+\A D(\textbf{1})$ for all $\A \in
\Mn$. The proof is thus completed.
\end{proof}
The following theorem is a consequence of Theorem~\ref{asli}.
\begin{thm}\label{natije}
Let $D:\Mn \rightarrow \mathcal{M}$ be an additive map satisfying
\[
\A\B=\B\A=0\Rightarrow D(\A)\B+\A D(\B)+D(\B)\A+\B D(\A)-\A
D(\textbf{1})\B-\B D(\textbf{1})\A=0.
\]
Then there exist a derivation $\delta: \Mn \rightarrow
\mathcal{M}$ such that $D(\A)=\delta(\A)+\A D(\textbf{1})$ for
each $\A\in \Mn$.
\end{thm}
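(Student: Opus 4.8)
The plan is to reduce Theorem~\ref{natije} to Theorem~\ref{asli} by absorbing the generalized-derivation correction term into the map. Concretely, I would introduce the auxiliary additive map $\delta_0:\Mn\rightarrow\mathcal{M}$ defined, as in Remark~\ref{R}, by $\delta_0(\A)=D(\A)-\A D(\textbf{1})$, and show that $\delta_0$ satisfies the hypothesis $(\ast)$ of Theorem~\ref{asli}. Everything then follows by feeding $\delta_0$ into that theorem.

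For the verification, fix $\A,\B\in\Mn$ with $\A\B=\B\A=0$ and expand $\delta_0(\A)\B+\A\delta_0(\B)+\delta_0(\B)\A+\B\delta_0(\A)$ using the definition of $\delta_0$. The term $\A\delta_0(\B)=\A D(\B)-\A\B D(\textbf{1})$ loses its second summand because $\A\B=0$, and likewise $\B\delta_0(\A)=\B D(\A)-\B\A D(\textbf{1})=\B D(\A)$ because $\B\A=0$; the terms $\delta_0(\A)\B$ and $\delta_0(\B)\A$ contribute exactly $-\A D(\textbf{1})\B$ and $-\B D(\textbf{1})\A$. Collecting, the whole expression equals $D(\A)\B+\A D(\B)+D(\B)\A+\B D(\A)-\A D(\textbf{1})\B-\B D(\textbf{1})\A$, which vanishes by the hypothesis of Theorem~\ref{natije}. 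Hence $\delta_0$ satisfies $(\ast)$.

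Applying Theorem~\ref{asli} to $\delta_0$ yields a derivation $\delta:\Mn\rightarrow\mathcal{M}$ with $\delta_0(\A)=\delta(\A)+\A\,\delta_0(\textbf{1})$ for all $\A\in\Mn$, and moreover $\delta_0(\textbf{1})\in Z(\mathcal{M})$. But $\delta_0(\textbf{1})=D(\textbf{1})-\textbf{1}\,D(\textbf{1})=0$ (here the unitality of $\mathcal{M}$ is what makes $\textbf{1}\,D(\textbf{1})=D(\textbf{1})$ legitimate), so $\delta_0=\delta$ is itself a derivation. Then $D(\A)=\delta_0(\A)+\A D(\textbf{1})=\delta(\A)+\A D(\textbf{1})$ for every $\A\in\Mn$, which is the asserted conclusion.

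I do not expect a genuine obstacle: the entire content is carried by Theorem~\ref{asli}, and the only point requiring care is the bookkeeping in the substitution step, namely making sure that each of the four cross terms in the expansion of $\delta_0(\A)\B+\A\delta_0(\B)+\delta_0(\B)\A+\B\delta_0(\A)$ is simplified using the appropriate one of the two relations $\A\B=0$ and $\B\A=0$. Note also that, unlike in Theorem~\ref{asli}, no centrality of $D(\textbf{1})$ is claimed here, which is consistent with the vanishing of $\delta_0(\textbf{1})$ making the centrality assertion of Theorem~\ref{asli} vacuous in this application.
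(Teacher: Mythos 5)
Your proposal is correct and is essentially the paper's own proof: the paper likewise sets $\delta(\A)=D(\A)-\A D(\textbf{1})$, notes that it satisfies $(\ast)$ with $\delta(\textbf{1})=0$, and invokes Theorem~\ref{asli}. Your write-up simply makes explicit the bookkeeping (which terms vanish via $\A\B=0$ versus $\B\A=0$) that the paper leaves as ``easy to see.''
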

\begin{proof}
Define $\delta:\Mn\rightarrow \mathcal{M}$ by $\delta(\A)=D(\A)-\A
D(\textbf{1})$. It is easy too see that $\delta$ is an additive
map satisfying $(\ast)$ and $\delta(\textbf{1})=0$. By
Theorem~\ref{asli}, $\delta$ is a derivation. Thus
$D(\A)=\delta(\A)+\A D(\textbf{1})$ for all $\A\in \Mn$ and proof
is completed.
\end{proof}
Let $\mathcal{R}$ be a unital ring and $\mathcal{N}$ be a unital
$\mathcal{R}$-bimodule. Let $M_{n}(\mathcal{N})$ be the set of all
$n\times n$ matrices over $\mathcal{N}$, then $M_{n}(\mathcal{N})$
has a natural structure as unital $\Mn$-bimodule. Any derivation
$d:\mathcal{R}\rightarrow \mathcal{N}$, induces a derivation
$\bar{d}:\Mn\rightarrow M_{n}(\mathcal{N})$ given by
$\bar{d}(\A)=\textsc{\textbf{n}}$, where $n_{i,j}=d(a_{i,j})$. By
similar method as in proof of \cite[Theorem 3.1]{Ali}, we can
show that if $\delta:\Mn\rightarrow M_{n}(\mathcal{N})$ is a
derivation, then there is an inner derivation
$I_{\textsc{\textbf{g}}}:\Mn\rightarrow M_{n}(\mathcal{N})$ and a
derivation $d:\mathcal{R}\rightarrow \mathcal{N}$ such that
$\delta=\bar{d}+I_{\textsc{\textbf{g}}}$. So by
Theorem~\ref{asli}, we have the following corollary.
\begin{cor}\label{natije2}
Let $\mathcal{R}$ be a unital ring and $\mathcal{N}$ be a
$2$-torsion free unital $\mathcal{R}$-bimodule. Let $D:\Mn
\rightarrow M_{n}(\mathcal{N})$ be an additive mapping.
\begin{itemize}
\item[(i)] If $D$ satisfies $(\ast)$, then there is an inner derivation
$I_{\textsc{\textbf{g}}}:\Mn\rightarrow M_{n}(\mathcal{N})$ and a
derivation $d:\mathcal{R}\rightarrow \mathcal{N}$ such that
$D(\A)=\bar{d}(\A)+I_{\textsc{\textbf{g}}}(\A)+\A D(\textbf{1})$
for all $\A\in \Mn$, where $D(\textbf{1})\in
Z(M_{n}(\mathcal{N})).$
\item[(ii)] If $D$ satisfies $(\ast\ast)$, then there is an inner derivation
$I_{\textsc{\textbf{g}}}:\Mn\rightarrow M_{n}(\mathcal{N})$ and a
derivation $d:\mathcal{R}\rightarrow \mathcal{N}$ such that
$D(\A)=\bar{d}(\A)+I_{\textsc{\textbf{g}}}(\A)+\A D(\textbf{1})$
for all $\A\in \Mn$.
\end{itemize}
\end{cor}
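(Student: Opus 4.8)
The plan is to obtain this corollary by specialising Theorems~\ref{asli} and~\ref{natije} to the bimodule $\mathcal{M}=M_{n}(\mathcal{N})$ and then invoking the description of derivations $\Mn\to M_{n}(\mathcal{N})$ recalled just before the statement. First I would note that $M_{n}(\mathcal{N})$, with its natural $\Mn$-bimodule structure, is indeed a \emph{unital} $\Mn$-bimodule (the identity matrix acts as identity because $\mathcal{N}$ is a unital $\mathcal{R}$-bimodule) and is $2$-torsion free because $\mathcal{N}$ is. Hence Theorem~\ref{asli} applies in case (i) and Theorem~\ref{natije} applies in case (ii): in case (i), from $(\ast)$ we get a derivation $\delta:\Mn\to M_{n}(\mathcal{N})$ with $D(\A)=\delta(\A)+\A D(\textbf{1})$ for all $\A\in\Mn$ and with $D(\textbf{1})\in Z(M_{n}(\mathcal{N}))$; in case (ii), from $(\ast\ast)$ we get a derivation $\delta$ satisfying the same identity $D(\A)=\delta(\A)+\A D(\textbf{1})$, but now without any centrality conclusion. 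At this point everything is reduced to understanding the derivation $\delta$.

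The substantive ingredient is the structural statement, established exactly as in \cite[Theorem~3.1]{Ali}, that every derivation $\delta:\Mn\to M_{n}(\mathcal{N})$ has the form $\delta=\bar{d}+I_{\textsc{\textbf{g}}}$ for some derivation $d:\mathcal{R}\to\mathcal{N}$ and some $\textsc{\textbf{g}}\in M_{n}(\mathcal{N})$. I would sketch this as follows. From $\E_{ij}=\E_{ik}\E_{kj}$ one gets $\delta(\E_{ij})=\delta(\E_{ik})\E_{kj}+\E_{ik}\delta(\E_{kj})$, and using these relations together with the orthogonality of the matrix units one checks that for a suitable element $\textsc{\textbf{g}}\in M_{n}(\mathcal{N})$ assembled from the values $\delta(\E_{ij})$ the derivation $\delta':=\delta-I_{\textsc{\textbf{g}}}$ annihilates every $\E_{ij}$. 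Then, for $a\in\mathcal{R}$, the identities $a\E_{ij}=(a\E_{ii})\E_{ij}=\E_{i1}(a\E_{1j})$ combined with $\delta'(\E_{ij})=0$ force $\delta'(a\E_{ij})=d(a)\E_{ij}$ for a well-defined additive map $d:\mathcal{R}\to\mathcal{N}$ not depending on $i,j$; expanding $\delta'\big((a\E_{11})(b\E_{11})\big)$ and using the bimodule identities $(x\E_{11})(b\E_{11})=(xb)\E_{11}$ and $(a\E_{11})(y\E_{11})=(ay)\E_{11}$ gives $d(ab)=d(a)b+ad(b)$, so $d$ is a derivation and $\delta'=\bar{d}$.

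Combining the two pieces finishes the proof. In case (i) we obtain $D(\A)=\delta(\A)+\A D(\textbf{1})=\bar{d}(\A)+I_{\textsc{\textbf{g}}}(\A)+\A D(\textbf{1})$ for all $\A\in\Mn$, and the assertion $D(\textbf{1})\in Z(M_{n}(\mathcal{N}))$ is precisely what Theorem~\ref{asli} already delivered; in case (ii) the same computation, starting instead from Theorem~\ref{natije}, yields $D(\A)=\bar{d}(\A)+I_{\textsc{\textbf{g}}}(\A)+\A D(\textbf{1})$ with no centrality claim. This is exactly the stated conclusion.

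The only step I would expect to require real care is the decomposition $\delta=\bar{d}+I_{\textsc{\textbf{g}}}$: one must pick $\textsc{\textbf{g}}$ correctly, verify --- using only the bimodule structure of $M_{n}(\mathcal{N})$ and no multiplication on $\mathcal{N}$ --- that $\delta-I_{\textsc{\textbf{g}}}$ kills all matrix units, and check that the induced map $d$ is well defined independently of the chosen factorisations $a\E_{ij}=(a\E_{ii})\E_{ij}=\E_{i1}(a\E_{1j})$. This is routine in the style of \cite{Ali}; once it is in hand, the rest of the argument is bookkeeping, so I anticipate no further obstruction.
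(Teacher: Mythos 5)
Your proposal matches the paper's own argument: the paper likewise applies Theorem~\ref{asli} (and, for part (ii), Theorem~\ref{natije}) to the $2$-torsion free unital $\Mn$-bimodule $M_{n}(\mathcal{N})$ and then invokes the decomposition $\delta=\bar{d}+I_{\textsc{\textbf{g}}}$ of any derivation $\Mn\to M_{n}(\mathcal{N})$, established as in \cite[Theorem 3.1]{Ali}. Your sketch of that decomposition is consistent with the cited argument, so the proof is correct and essentially identical in approach.
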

\section{Jordan derivations of matrix rings}
In this section we characterize Jordan derivations of matrix rings
into bimodules which are not necessarily unital bimodule. To
prove the main result, we need the following lemma.
\begin{lem}\label{L}
Let $\mathcal{A}$ be a unital ring. Then the following are
equivalent:
\begin{itemize}
\item[(i)] for every $2$-torsion free unital
$\mathcal{A}$-bimodule $\mathcal{M}$, each Jordan derivation
$D:\mathcal{A}\rightarrow \mathcal{M}$ is a derivation.
\item[(ii)] for every $2$-torsion free $\mathcal{A}$-bimodule $\mathcal{M}$, each Jordan derivation
$D:\mathcal{A}\rightarrow \mathcal{M}$ is a derivation.
\item[(iii)] for every $2$-torsion free $\mathcal{A}$-bimodule $\mathcal{M}$, each generalized Jordan derivation
$D:\mathcal{A}\rightarrow \mathcal{M}$ is a generalized
derivation.
\end{itemize}
\end{lem}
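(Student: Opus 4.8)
The plan is to prove the cycle of implications $(iii)\Rightarrow(ii)\Rightarrow(i)\Rightarrow(iii)$. Two of these are essentially trivial, and the content lies in $(i)\Rightarrow(iii)$ — or rather, in the passage from an arbitrary bimodule to a unital one, which is where the $2$-torsion freeness will be used together with a standard unitalization trick.

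For $(iii)\Rightarrow(ii)$: a Jordan derivation is in particular a generalized Jordan derivation with $D(1)$ not necessarily zero, but in fact if $D$ is a Jordan derivation then putting $a=b=1$ gives $2D(1)=2D(1)+2D(1)$, so $2D(1)=0$, and since $\mathcal{M}$ is $2$-torsion free, $D(1)=0$; hence the generalized Jordan derivation identity collapses to the Jordan derivation identity, and by $(iii)$ the map $D$ is a generalized derivation with $D(1)=0$, i.e. an honest derivation. For $(ii)\Rightarrow(i)$: this is immediate, since a unital bimodule is in particular a bimodule. So the only real step is $(i)\Rightarrow(iii)$.

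For $(i)\Rightarrow(iii)$, let $\mathcal{M}$ be a $2$-torsion free $\mathcal{A}$-bimodule and $D:\mathcal{A}\rightarrow\mathcal{M}$ a generalized Jordan derivation. First I would reduce to a Jordan derivation by subtracting off the generalized part: since $\mathcal{A}$ is unital and (by the same computation as above, using $2$-torsion freeness) one can analyze $D(1)$; but more directly, I want to produce from $\mathcal{M}$ a \emph{unital} bimodule. The standard device is to replace $\mathcal{M}$ by $\mathcal{M}_1 := 1\mathcal{M}1$ together with the complementary pieces: decompose $\mathcal{M} = 1\mathcal{M}1 \oplus (1-1)\cdot(\ldots)$ — but $\mathcal{A}$ has only the one idempotent $1$ available abstractly, so instead use the classical trick of forming $\widehat{\mathcal{M}} = \mathcal{M}$ as an $\mathcal{A}$-bimodule on which $1$ need not act as the identity, and pass to the unital bimodule by splitting $m = 1m1 + (m - 1m1)$; one checks the second summand is annihilated on both sides by all of $\mathcal{A}$ (a "trivial" sub-bimodule), and a Jordan derivation into a trivial bimodule is just an additive map satisfying $D(ab+ba)=0$, which one shows is automatically a derivation (in fact zero on products) on a ring generated by idempotents, or more simply one handles it directly. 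Then $1\mathcal{M}1$ is a $2$-torsion free unital bimodule, apply $(i)$ there, and reassemble.

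The main obstacle I anticipate is precisely the handling of the non-unital part $\mathcal{M}\ominus 1\mathcal{M}1$: one must verify that the components of $D$ landing in that part still satisfy a Jordan-type identity and that $2$-torsion freeness forces them to vanish or to be derivations, so that the conclusion for the unital piece lifts back to all of $\mathcal{M}$. Concretely I would write $D(a) = 1D(a)1 + D_0(a)$ where $D_0(a) = D(a) - 1D(a)1$, show $1D(\cdot)1$ is a generalized Jordan derivation into the unital bimodule $1\mathcal{M}1$ (using that $1$ acts as identity there), invoke $(i)$ after killing $D(1)$ via $2$-torsion freeness, and separately show $D_0(ab+ba) = D_0(a)b + aD_0(b) + \cdots$ forces $D_0$ to be a derivation into the trivial part — the bookkeeping with the six-term generalized-Jordan identity and the left/right actions is the delicate part, everything else is formal.
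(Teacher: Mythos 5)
Your architecture --- a cycle of implications whose content is a unitalization of the bimodule via a Peirce-type decomposition with respect to $1$ --- is the same as the paper's, but two of your concrete claims are false, and the second one breaks the key step as written. First, in $(iii)\Rightarrow(ii)$ you assert that a Jordan derivation into a non-unital $2$-torsion free bimodule satisfies $D(1)=0$. Setting $a=b=1$ in the Jordan identity gives $2D(1)=2\bigl(1D(1)+D(1)1\bigr)$, hence only $D(1)=1D(1)+D(1)1$; you cannot cancel further because $1$ need not act as the identity on $\mathcal{M}$. For a counterexample take $\mathcal{M}=\mathcal{A}$ (with $\mathcal{A}$ itself $2$-torsion free) with zero left action and the regular right action: the identity map is a Jordan derivation, indeed a derivation, with $D(1)=1\neq 0$. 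The implication $(iii)\Rightarrow(ii)$ is still true, but it requires the observation that $aD(1)b=0$ for all $a,b$ (which follows from the decomposition below); alternatively one proves $(iii)\Rightarrow(i)$ instead, as the paper does, where unitality makes $D(1)=0$ immediate.

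Second, and more seriously, in your main step you split $m=1m1+(m-1m1)$ and claim the second summand is annihilated on both sides by $\mathcal{A}$. It is not: $a(m-1m1)=am-am1$, which is nonzero in general (take $\mathcal{M}=\mathcal{A}$ with the regular left action and zero right action). The complement of $1\mathcal{M}1$ is not a trivial bimodule; it splits further into $\mathcal{M}_2=\{1m-1m1\}$ (annihilated by $\mathcal{A}$ on the right only), $\mathcal{M}_3=\{m1-1m1\}$ (on the left only) and $\mathcal{M}_4=\{m-1m-m1+1m1\}$ (on both sides). Projecting the Jordan identity onto these pieces gives $D_2(ab+ba)=aD_2(b)+bD_2(a)$, $D_3(ab+ba)=D_3(a)b+D_3(b)a$ and $D_4(ab+ba)=0$; putting $b=1$ yields $D_2(a)=aD_2(1)$, $D_3(a)=D_3(1)a$ and $2D_4(a)=0$, so $D_4=0$ by $2$-torsion freeness, and $D_2$, $D_3$ are then readily checked to be derivations into their respective pieces. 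This four-way bookkeeping is the actual content of the reduction and is absent from your sketch; your fallback appeal to ``a ring generated by idempotents'' is neither assumed in the lemma nor needed. With these repairs your proof closes up and is essentially the paper's, with the hard implication relocated from $(i)\Rightarrow(ii)$ to $(i)\Rightarrow(iii)$.
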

\begin{proof}
$(i)\Rightarrow (ii)$ Let $\mathcal{M}$ be a $2$-torsion free
$\mathcal{A}$-bimodule and $1$ be the unity of $\mathcal{A}$.
Define the following sets:
\begin{equation*}
\begin{split}
&\mathcal{M}_{1}=\{1m1\,|\, m\in \mathcal{M} \},\\&
\mathcal{M}_{2}=\{1m-1m1\,|\, m\in \mathcal{M} \},\\&
\mathcal{M}_{3}=\{m1-1m1\,|\, m\in \mathcal{M} \}\quad and\\&
\mathcal{M}_{4}=\{m-1m-m1+1m1\,|\, m\in \mathcal{M} \}.
\end{split}
\end{equation*}
Every $\mathcal{M}_{j}$ for $1\leq j \leq 4$ is an
$\mathcal{A}$-subbimodule of $\mathcal{M}$ such that
$\mathcal{M}_{1}$ is unital and
\[
\mathcal{M}_{2}\mathcal{A}=\mathcal{A}\mathcal{M}_{3}=\mathcal{M}_{4}\mathcal{A}=\mathcal{A}\mathcal{M}_{4}=\{0\}.\]
Also $1m_{2}=m_{2}$ for all $m_{2}\in \mathcal{M}_{2}$,
$m_{3}1=m_{3}$ for all $m_{3}\in \mathcal{M}_{3}$ and
$\mathcal{M}=\mathcal{M}_{1}\dot{+}\mathcal{M}_{2}\dot{+}\mathcal{M}_{3}\dot{+}\mathcal{M}_{4}$
as sum of $\mathcal{A}$-bimodules. Let $D:\mathcal{A}\rightarrow
\mathcal{M}$ be a Jordan derivation. So
$D=D_{1}+D_{2}+D_{3}+D_{4}$, where each $D_{j}$ is an additive
map from $\mathcal{A}$ to $\mathcal{M}_{j}$. Since
$D(ab+ba)=D(a)b+aD(b)+D(b)a+bD(a)$ for all $a,b\in \mathcal{A}$,
from the above results we get
\begin{equation*}
\begin{split}
D_{1}&(ab+ba)+D_{2}(ab+ba)+D_{3}(ab+ba)+D_{4}(ab+ba)\\&
=D_{1}(a)b+D_{3}(a)b+aD_{1}(b)+aD_{2}(b)+D_{1}(b)a+D_{3}(b)a+bD_{1}(a)+bD_{2}(a).
\end{split}
\end{equation*}
Therefore
\begin{equation}\label{15}
\begin{split}
&D_{1}(ab+ba)=D_{1}(a)b+aD_{1}(b)+D_{1}(b)a+bD_{1}(a),\\&
D_{2}(ab+ba)=aD_{2}(b)+bD_{2}(a),\\&
D_{3}(ab+ba)=D_{3}(a)b+D_{3}(b)a \quad and\\&
D_{4}(ab+ba)=0
\end{split}
\end{equation}
So $D_{1}$ is a Jordan derivation and by hypothesis it is a
derivation since $\mathcal{M}_{1}$ is a $2$-torsion free unital
$\mathcal{A}$-bimodule. Now taking $b=1$ in Equations\eqref{15},
we arrive at $D_{2}(a)=aD_{2}(1)$, $D_{3}(a)=D_{3}(1)a$ and
$2D_{4}(a)=0$. Hence $D_{4}(a)=0$ since $\mathcal{M}$ is
$2$-torsion free. By previous results it is obvious that $D$ is a
derivation.
\par
 $(ii)\Rightarrow (iii)$ Let $\mathcal{M}$ be a
$2$-torsion free $\mathcal{A}$-bimodule and
$D:\mathcal{A}\rightarrow \mathcal{M}$ be a generalized Jordan
derivation. The mapping $\delta:\mathcal{A}\rightarrow
\mathcal{M}$ defined by $\delta(a)=D(a)-aD(1)$ is a Jordan
derivation and hence it is a derivation. So from Remark~\ref{R},
$D$ is a generalized derivation.
\par
 $(iii)\Rightarrow (i)$
Let $\mathcal{M}$ be a $2$-torsion free unital
$\mathcal{A}$-bimodule and $D:\mathcal{A}\rightarrow \mathcal{M}$
be a Jordan derivation. So $D(1)=0$ since $\mathcal{M}$ is a
unital $\mathcal{A}$-bimodule. Hence from hypothesis it is clear
that $D$ is a derivation.
\end{proof}
If $\mathcal{M}$ is a $2$-torsion free unital $\Mn$-bimodule and
$D:\Mn\rightarrow \mathcal{M}$ is a Jordan derivation, then $D$
satisfies $(\ast)$ and $D(\textbf{1})=0$ , and hence $D$ is a
derivation by Theorem~\ref{asli}. So from Lemma~\ref{L} we have
the following theorem which is a generalization of \cite[Theorem
3.1]{Ali}.
\begin{thm}
Let $\mathcal{M}$ be a $2$-torsion free $\Mn$-bimodule and $D:\Mn
\rightarrow \mathcal{M}$ be an additive mapping.
\begin{enumerate}
\item[(i)] If $D$ is a Jordan derivation, then $D$ is a derivation.
\item[(ii)] If $D$ is a generalized Jordan derivation, then $D$ is a generalized derivation.
\end{enumerate}
\end{thm}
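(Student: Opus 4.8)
The plan is to deduce the theorem directly from Theorem~\ref{asli} and Lemma~\ref{L}, without any further computation. The first step is to check that condition (i) of Lemma~\ref{L} holds for the ring $\Mn$; that is, that every Jordan derivation from $\Mn$ into a $2$-torsion free \emph{unital} $\Mn$-bimodule is a derivation. So let $\mathcal{M}$ be such a bimodule and let $D:\Mn\rightarrow\mathcal{M}$ be a Jordan derivation. If $\A,\B\in\Mn$ satisfy $\A\B=\B\A=0$, then $\A\B+\B\A=0$ and the Jordan identity gives $D(\A)\B+\A D(\B)+D(\B)\A+\B D(\A)=D(\A\B+\B\A)=D(0)=0$, so $D$ satisfies $(\ast)$. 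Taking $\A=\B=\textbf{1}$ in the Jordan identity and using that $\mathcal{M}$ is unital yields $2D(\textbf{1})=4D(\textbf{1})$, whence $D(\textbf{1})=0$ since $\mathcal{M}$ is $2$-torsion free. Theorem~\ref{asli} then provides a derivation $\delta:\Mn\rightarrow\mathcal{M}$ with $D(\A)=\delta(\A)+\A D(\textbf{1})=\delta(\A)$ for every $\A\in\Mn$, so $D=\delta$ is a derivation, as required.

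Having verified condition (i) of Lemma~\ref{L} for $\Mn$, the second step is simply to invoke the lemma: its conditions (i), (ii), (iii) are equivalent, so all three hold for $\Mn$. Condition (ii) says every Jordan derivation from $\Mn$ into a $2$-torsion free (not necessarily unital) $\Mn$-bimodule is a derivation, which is part (i) of the theorem; condition (iii) says every generalized Jordan derivation into such a bimodule is a generalized derivation, which is part (ii). This finishes the proof.

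Since the genuinely hard work is packaged inside Theorem~\ref{asli} and Lemma~\ref{L}, I do not expect any real obstacle. The only places requiring a little care are the two elementary reductions in the first step --- observing that a Jordan derivation into a unital bimodule automatically satisfies $(\ast)$ and annihilates the identity --- and being careful to feed in condition (i) of Lemma~\ref{L} (obtained from Theorem~\ref{asli}) so as to extract (ii) and (iii) from it.
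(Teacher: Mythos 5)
Your proposal is correct and follows exactly the paper's own route: verify that a Jordan derivation into a $2$-torsion free unital $\Mn$-bimodule satisfies $(\ast)$ with $D(\textbf{1})=0$, apply Theorem~\ref{asli} to conclude it is a derivation, and then invoke the equivalences of Lemma~\ref{L} to obtain both parts of the theorem. Your explicit check that $D(\textbf{1})=0$ (via $2D(\textbf{1})=4D(\textbf{1})$) is a small detail the paper states without proof, but the argument is the same.
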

By Corollary~\ref{natije2}, the following corollary is obvious.
\begin{cor}
Let $\mathcal{R}$ be a unital ring and let $\mathcal{N}$ be a
$2$-torsion free unital $\mathcal{R}$-bimodule. Let $D:\Mn
\rightarrow M_{n}(\mathcal{N})$ be an additive mapping.
\begin{enumerate}
\item[(i)] If $D$ is a Jordan derivation, then there is an inner derivation
$I_{\textsc{\textbf{g}}}:\Mn\rightarrow M_{n}(\mathcal{N})$ and a
derivation $d:\mathcal{R}\rightarrow \mathcal{N}$ such that
$D(\A)=\bar{d}(\A)+I_{\textsc{\textbf{g}}}(\A)$ for all $\A\in
\Mn$.
\item[(ii)] If $D$ is a generalized Jordan derivation, then there is an inner derivation
$I_{\textsc{\textbf{g}}}:\Mn\rightarrow M_{n}(\mathcal{N})$ and a
derivation $d:\mathcal{R}\rightarrow \mathcal{N}$ such that
$D(\A)=\bar{d}(\A)+I_{\textsc{\textbf{g}}}(\A)+\A D(\textbf{1})$
for all $\A\in \Mn$.
\end{enumerate}
\end{cor}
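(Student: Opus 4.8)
The plan is to deduce this directly from Corollary~\ref{natije2}. The key observation I would use is that the hypotheses on $\mathcal{R}$ and $\mathcal{N}$ make $M_{n}(\mathcal{N})$ a $2$-torsion free unital $\Mn$-bimodule, which is precisely the ambient setting of that corollary; so it suffices to check that a (generalized) Jordan derivation into this module satisfies $(\ast)$ (resp. $(\ast\ast)$) and, in the Jordan case, that $D(\textbf{1})=0$.

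For (i) I would argue as follows. If $D:\Mn\rightarrow M_{n}(\mathcal{N})$ is a Jordan derivation and $\A\B=\B\A=0$, then $\A\B+\B\A=0$, so applying $D$ and using $D(0)=0$ gives $D(\A)\B+\A D(\B)+D(\B)\A+\B D(\A)=0$; thus $D$ satisfies $(\ast)$. Next, evaluating the Jordan identity at $\A=\B=\textbf{1}$ and using that $M_{n}(\mathcal{N})$ is unital yields $2D(\textbf{1})=4D(\textbf{1})$, whence $2D(\textbf{1})=0$ and then $D(\textbf{1})=0$ by $2$-torsion freeness. Corollary~\ref{natije2}(i) then furnishes an inner derivation $I_{\textsc{\textbf{g}}}:\Mn\rightarrow M_{n}(\mathcal{N})$ and a derivation $d:\mathcal{R}\rightarrow \mathcal{N}$ with $D(\A)=\bar{d}(\A)+I_{\textsc{\textbf{g}}}(\A)+\A D(\textbf{1})=\bar{d}(\A)+I_{\textsc{\textbf{g}}}(\A)$. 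For (ii) the same reduction works with $(\ast\ast)$ in place of $(\ast)$: if $D$ is a generalized Jordan derivation and $\A\B=\B\A=0$, applying the generalized Jordan identity to $\A\B+\B\A=0$ gives $D(\A)\B+\A D(\B)+D(\B)\A+\B D(\A)-\A D(\textbf{1})\B-\B D(\textbf{1})\A=0$, so $D$ satisfies $(\ast\ast)$, and Corollary~\ref{natije2}(ii) gives the stated decomposition $D(\A)=\bar{d}(\A)+I_{\textsc{\textbf{g}}}(\A)+\A D(\textbf{1})$.

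I do not expect any real obstacle here: all the structural content is already carried by Theorem~\ref{asli} through Corollary~\ref{natije2}, and what remains is only the routine verification that the defining identity of a (generalized) Jordan derivation specializes to $(\ast)$ (resp. $(\ast\ast)$) on zero-product pairs, together with the one-line computation $D(\textbf{1})=0$ in the Jordan case. Accordingly the corollary is immediate, as the text indicates.
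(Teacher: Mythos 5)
Your proposal is correct and follows exactly the route the paper intends: verify that a (generalized) Jordan derivation into the unital bimodule $M_{n}(\mathcal{N})$ satisfies $(\ast)$ (resp.\ $(\ast\ast)$), note $D(\textbf{1})=0$ in the Jordan case, and invoke Corollary~\ref{natije2}. The paper simply declares the corollary ``obvious'' from Corollary~\ref{natije2}, and your write-up supplies precisely the routine verifications that claim rests on.
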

\section{Some related results}
In this section, by applying results in section 2, we obtain some
results about matrix ring $\Mn$.
\begin{lem}\label{pj}
Let $\mathcal{A}$ be a $2$-torsion free unital ring. Suppose that
each additive mapping $D:\mathcal{A}\rightarrow \mathcal{A}$
satisfying $(\ast)$ is a generalized derivation with $D(1)\in
Z(\mathcal{A})$. Let $\varphi:\mathcal{A}\rightarrow \mathcal{A}$
be an additive map satisfying
\[ \varphi(ab+ba)=a\varphi(b)+\varphi(b)a \quad (a,b\in \mathcal{A}). \]
Then $\varphi(a)=a\varphi(1)$ for all $a\in \mathcal{A}$, where
$\varphi(1)\in Z(\mathcal{A})$.
\end{lem}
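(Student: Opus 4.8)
The plan is to verify that $\varphi$ itself satisfies condition $(\ast)$, invoke the hypothesis to learn that $\varphi(1)$ is central, and then finish with a single substitution. The starting observation is that the left-hand side of the defining identity $\varphi(ab+ba)=a\varphi(b)+\varphi(b)a$ is symmetric in $a$ and $b$, while the right-hand side is not. Interchanging $a$ and $b$ therefore yields
\[
a\varphi(b)+\varphi(b)a=\varphi(ab+ba)=\varphi(ba+ab)=b\varphi(a)+\varphi(a)b
\]
for all $a,b\in\mathcal{A}$.

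Next I would check $(\ast)$. Suppose $a,b\in\mathcal{A}$ satisfy $ab=ba=0$. Then $ab+ba=0$, so applying the defining identity to this particular pair gives $a\varphi(b)+\varphi(b)a=\varphi(0)=0$, and interchanging $a$ and $b$ gives $b\varphi(a)+\varphi(a)b=0$. Adding these two relations yields
\[
\varphi(a)b+a\varphi(b)+\varphi(b)a+b\varphi(a)=0,
\]
which is precisely $(\ast)$. By the hypothesis of the lemma, $\varphi$ is then a generalized derivation and, crucially, $\varphi(1)\in Z(\mathcal{A})$.

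Finally, setting $b=1$ in the defining identity gives $2\varphi(a)=\varphi(a+a)=a\varphi(1)+\varphi(1)a$; since $\varphi(1)$ is central this equals $2a\varphi(1)$, and because $\mathcal{A}$ is $2$-torsion free we conclude $\varphi(a)=a\varphi(1)$ for every $a\in\mathcal{A}$. The only point requiring any thought is the first one: recognizing that the failure of the right-hand side to be symmetric is exactly what forces $\varphi$ to satisfy $(\ast)$. Once that is in hand the rest is routine, and in fact only the assertion $\varphi(1)\in Z(\mathcal{A})$ from the hypothesis is used, not the full generalized-derivation conclusion.
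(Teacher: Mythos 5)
Your proof is correct. The first half coincides with the paper's argument: for $ab=ba=0$ you use $ab+ba=0$ and the defining identity (applied to both orderings of the pair) to verify $(\ast)$, and then invoke the hypothesis. Where you diverge is in the finish. The paper uses the full generalized-derivation conclusion, writing out $\varphi(ab)=\varphi(a)b+a\varphi(b)-a\varphi(1)b$ and $\varphi(ba)=\varphi(b)a+b\varphi(a)-b\varphi(1)a$, adding them, recognizing both halves of the sum as $\varphi(ab+ba)$ via the defining identity, and cancelling to get $\varphi(ab+ba)=ab\varphi(1)+ba\varphi(1)$ before setting $b=1$. You instead set $b=1$ directly in the defining identity to get $2\varphi(a)=a\varphi(1)+\varphi(1)a=2a\varphi(1)$ and divide by $2$. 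Your route is shorter and makes explicit something the paper's proof obscures: only the centrality of $\varphi(1)$ from the hypothesis' conclusion is actually needed, not the generalized-derivation property itself. Both arguments use $2$-torsion freeness at the same final step, so nothing is lost in generality.
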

\begin{proof}
Let $a,b\in \mathcal{A}$ with $ab=ba=0$. So $ab+ba=0$ and hence
\[ \varphi(ab+ba)=a\varphi(b)+\varphi(b)a=0, \]
\[ \varphi(ba+ab)=b\varphi(a)+\varphi(a)b=0. \]
Therefore, $a\varphi(b)+\varphi(b)a+b\varphi(a)+\varphi(a)b=0$
and $\varphi$ satisfies $(\ast)$. Thus by hypothesis $\varphi$ is
a generalized derivation with $\varphi(1)\in Z(\mathcal{A})$. So
we have
\begin{equation*}
\begin{split}
& \varphi(ab)=a\varphi(b)+\varphi(a)b-a\varphi(1)b\\& and
\\&
\varphi(ba)=b\varphi(a)+\varphi(b)a-b\varphi(1)a.
\end{split}
\end{equation*}
for all $a,b\in \mathcal{A}$. From these identities we get
\[ \varphi(ab+ba)= 2\varphi(ab+ba)-ab\varphi(1)-ba\varphi(1). \]
Hence $\varphi(ab+ba)=ab\varphi(1)+ba\varphi(1)$. Letting $b=1$ in
this identity, we find $\varphi(a)=a\varphi(1)$ for all $a\in
\mathcal{A}$, where $\varphi(1)\in Z(\mathcal{A})$.
\end{proof}
By Theorem~\ref{asli} and Lemma~\ref{pj} we have the following
theorem.
\begin{thm}
Let $\mathcal{R}$ be a $2$-torsion free unital ring, and let
$\varphi:\Mn \rightarrow \Mn$ be an additive map satisfying
\[ \varphi(\A \B+\B \A)=\A\varphi(\B)+\varphi(\B)\A \quad (\A,\B \in \Mn). \]
Then $\varphi(\A)=\A\varphi(\textbf{1})$ for all $\A\in \Mn$,
where $\varphi(\textbf{1})\in Z(\Mn)$.
\end{thm}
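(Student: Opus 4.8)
The plan is to deduce this from Theorem~\ref{asli} together with Lemma~\ref{pj}, so that the only real work is to verify that $\Mn$ satisfies the hypotheses of Lemma~\ref{pj}. Taking $\mathcal{A}=\Mn$, I first observe that $\Mn$ is a $2$-torsion free unital ring: the identity matrix $\textbf{1}$ is its unity, and $2$-torsion freeness of $\Mn$ follows entrywise from that of $\mathcal{R}$.

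Next I regard $\Mn$ as a bimodule over itself, with the module actions given by left and right multiplication. This is a unital $\Mn$-bimodule, since $\textbf{1}$ acts as the identity on both sides, and it is $2$-torsion free by the previous remark. Hence Theorem~\ref{asli} applies to it: every additive map $D:\Mn\rightarrow\Mn$ satisfying $(\ast)$ has the form $D(\A)=\delta(\A)+\A D(\textbf{1})$ for all $\A\in\Mn$, where $\delta:\Mn\rightarrow\Mn$ is a derivation and $D(\textbf{1})$ lies in the centre of the bimodule $\Mn$, which coincides with $Z(\Mn)$. By Remark~\ref{R}, applied with $\mathcal{M}=\mathcal{A}=\Mn$, such a map $D$ is a generalized derivation. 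Consequently the hypothesis of Lemma~\ref{pj} — that each additive $D:\mathcal{A}\rightarrow\mathcal{A}$ satisfying $(\ast)$ is a generalized derivation with $D(1)\in Z(\mathcal{A})$ — holds for $\mathcal{A}=\Mn$.

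Finally I apply Lemma~\ref{pj} directly to $\varphi:\Mn\rightarrow\Mn$, since by assumption $\varphi$ satisfies exactly the required identity $\varphi(\A\B+\B\A)=\A\varphi(\B)+\varphi(\B)\A$. The lemma then gives $\varphi(\A)=\A\varphi(\textbf{1})$ for all $\A\in\Mn$ with $\varphi(\textbf{1})\in Z(\Mn)$, which is the claim.

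Since all the ingredients are already established, no serious obstacle remains; the only point needing a moment's care is confirming that the regular bimodule $\Mn$ is unital and $2$-torsion free and that its centre as a bimodule is the ring centre $Z(\Mn)$, so that Theorem~\ref{asli} is genuinely applicable and delivers the centrality of $\varphi(\textbf{1})$.
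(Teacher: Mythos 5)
Your proposal is correct and follows essentially the same route as the paper, which proves this theorem simply by combining Theorem~\ref{asli} (applied to $\Mn$ viewed as a $2$-torsion free unital bimodule over itself, so that every additive map satisfying $(\ast)$ is a generalized derivation with central value at $\textbf{1}$) with Lemma~\ref{pj}. The extra details you supply — unitality and $2$-torsion freeness of the regular bimodule and the identification of its bimodule centre with $Z(\Mn)$ — are exactly the routine verifications the paper leaves implicit.
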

Given a ring $\mathcal{A}$ and an $\mathcal{A}$-bimodule
$\mathcal{M}$, the \emph{trivial extension} of $\mathcal{A}$ by
$\mathcal{M}$ is the ring
$T(\mathcal{A},\mathcal{M})=\mathcal{A}\oplus\mathcal{M}$ with
the usual addition and the following multiplication:
\[ (a_{1},m_{1})(a_{2},m_{2})=(a_{1}a_{2}, a_{1}m_{2}+m_{1}a_{2}).
\]
\begin{lem}\label{ptri}
Let $\mathcal{A}$ be a $2$-torsion free unital ring. Suppose that
each additive mapping $D:\mathcal{A}\rightarrow \mathcal{A}$
satisfying $(\ast)$ is a generalized derivation with $D(1)\in
Z(\mathcal{A})$. Let $T(\mathcal{A},\mathcal{A})$ be the trivial
extension of $\mathcal{A}$ by $\mathcal{A}$. Then every Jordan
derivation from $T(\mathcal{A},\mathcal{A})$ into itself is a
derivation.
\end{lem}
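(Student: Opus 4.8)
The plan is to transfer a Jordan derivation $D$ on $T=T(\mathcal{A},\mathcal{A})$ to data on $\mathcal{A}$, use the hypothesis (together with Lemma~\ref{pj}) to force that data into a rigid form, and then check that $D$ is multiplicative. Write $T=\mathcal{A}\oplus\mathcal{M}$ with $\mathcal{M}=\mathcal{A}$ as an $\mathcal{A}$-bimodule. The unit of $T$ is $(1,0)$, and since $\mathcal{M}$ is $2$-torsion free so is $T$; the idempotent $e=(1,0)$ and the "radical part" $(0,\mathcal{M})$ decompose $T$. First I would record the standard reduction: if $D$ is a Jordan derivation of $T$, then composing with the projections onto $\mathcal{A}$ and $\mathcal{M}$ and using $ab+ba$ identities (exactly as in the proof of Lemma~\ref{L}, Equations~\eqref{15}), one extracts additive maps $f,g,h,k:\mathcal{A}\to\mathcal{A}$ so that
\[
D\big((a,m)\big)=\big(f(a),\,g(a)+h(m)\big)
\]
where the $\mathcal{M}$-component cannot feed back into the $\mathcal{A}$-component because $(0,\mathcal{M})$ is an ideal with $(0,\mathcal{M})^2=0$. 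Plugging $D$ into the Jordan identity $D(xy+yx)=D(x)y+xD(y)+D(y)x+yD(x)$ and separating the $\mathcal{A}$- and $\mathcal{M}$-coordinates should yield: (i) $f$ is a Jordan derivation of $\mathcal{A}$; (ii) $h$ is a Jordan derivation of $\mathcal{A}$; and (iii) a mixed identity tying $g$, $f$ and $h$ together, of the shape $g(ab+ba)=f(a)b+a g(b)+ \cdots$, i.e. $g$ is a "Jordan $(f,h)$-derivation."

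Next I would exploit the hypothesis. Jordan derivations on $\mathcal{A}$ satisfy $(\ast)$ and annihilate $1$, so by hypothesis each of $f$ and $h$ is a generalized derivation with $f(1),h(1)\in Z(\mathcal{A})$; but $f(1)=h(1)=0$, so in fact $f$ and $h$ are genuine derivations of $\mathcal{A}$. The real content is the map $g$. Setting $a=1$ in the mixed Jordan identity forces the Jordan relation for $g$ to collapse to something of the form $\varphi(ab+ba)=a\varphi(b)+\varphi(b)a$ after absorbing the (now known, derivation-type) contributions of $f$ and $h$ — more precisely, I would show that $g_0(a):=g(a)-(\text{a correction built from } f,h,\text{ and }g(1))$ satisfies precisely the hypothesis of Lemma~\ref{pj}. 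Lemma~\ref{pj} then gives $g_0(a)=a\,g_0(1)$ with $g_0(1)\in Z(\mathcal{A})$, pinning down $g$ completely as a derivation-type map plus a right-multiplication by a central element.

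Finally, with $f,h$ derivations and $g$ of the determined form, I would verify directly that
\[
D\big((a_1,m_1)(a_2,m_2)\big)=D\big((a_1,m_1)\big)(a_2,m_2)+(a_1,m_1)D\big((a_2,m_2)\big),
\]
by expanding both sides in coordinates using $(a_1,m_1)(a_2,m_2)=(a_1a_2,\,a_1m_2+m_1a_2)$; the $\mathcal{A}$-component is immediate from $f$ being a derivation, and the $\mathcal{M}$-component reduces to the Leibniz rule for $g$ (with the central correction terms cancelling because they are central and because $(0,\mathcal{M})^2=0$ kills the offending cross terms). I expect the main obstacle to be the bookkeeping in extracting and then correctly "centering" the mixed map $g$: one must choose the correction term so that what remains is exactly a symmetric map $\varphi$ with $\varphi(ab+ba)=a\varphi(b)+\varphi(b)a$ and no residual $f$- or $h$-dependence, so that Lemma~\ref{pj} applies cleanly; getting that cancellation to work is where the $2$-torsion freeness and the vanishing of $f(1),h(1)$ are genuinely used.
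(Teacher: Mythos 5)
Your overall strategy matches the paper's: decompose $D$ componentwise, use the hypothesis to turn the Jordan-derivation components into derivations, and feed the mixed component into Lemma~\ref{pj}. But there is one genuine gap at the very first step. You posit the form $D((a,m))=(f(a),\,g(a)+h(m))$, i.e.\ you assume the $\mathcal{A}$-coordinate of $D((0,m))$ vanishes, justifying this by saying the $\mathcal{M}$-component ``cannot feed back'' because $(0,\mathcal{M})$ is a square-zero ideal. That justification is not valid: Jordan derivations (indeed even derivations) of a ring need not map an ideal into itself, and nothing about $(0,\mathcal{M})^2=0$ forces the $\mathcal{A}$-coordinate of $D((0,m))$ to be zero a priori. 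In the paper's notation this coordinate is a fourth additive map $\delta_2:\mathcal{A}\to\mathcal{A}$, and proving $\delta_2=0$ is a substantive step: one applies $\Delta$ to $(a,0)(0,b)+(0,b)(a,0)=(0,ab+ba)$ to get $\delta_2(ab+ba)=a\delta_2(b)+\delta_2(b)a$, so Lemma~\ref{pj} gives $\delta_2(a)=a\delta_2(1)$ with $\delta_2(1)$ central; then one applies $\Delta$ to $(0,a)(0,b)+(0,b)(0,a)=(0,0)$ to get $\delta_2(a)b+a\delta_2(b)+\delta_2(b)a+b\delta_2(a)=0$, and setting $b=1$ yields $\delta_2(a)=-a\delta_2(1)$; $2$-torsion freeness then kills $\delta_2$. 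Without this, your final verification that $D$ is multiplicative fails on products of the form $(0,b)(0,b')$.

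A second, more minor point: you have the roles of the two $\mathcal{M}$-valued maps reversed. Expanding $D$ on $(a,0)(b,0)+(b,0)(a,0)=(ab+ba,0)$ shows that it is $g$ (the $\mathcal{M}$-coordinate of $D((a,0))$) that is a genuine Jordan derivation, while $h$ (the $\mathcal{M}$-coordinate of $D((0,m))$) is the one satisfying the mixed identity $h(am+ma)=f(a)m+ah(m)+h(m)a+mf(a)$; it is then $\varphi=h-f$ (the paper's $\delta_4-\delta_1$) that satisfies the hypothesis of Lemma~\ref{pj}. This is fixable bookkeeping, and with it your ``centering'' step and final verification go through exactly as in the paper; but the vanishing of the $\mathcal{A}$-coordinate of $D$ restricted to $(0,\mathcal{M})$ must be proved, not assumed.
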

\begin{proof}
Let $\mathcal{T}=T(\mathcal{A},\mathcal{A})$ and
$\Delta:\mathcal{T}\rightarrow \mathcal{T}$ be a Jordan
derivation. We have
$\Delta((a,b))=(\delta_{1}(a)+\delta_{2}(b),\delta_{3}(a)+\delta_{4}(b))$
for each $a,b\in \mathcal{A}$, where
$\delta_{k}:\mathcal{A}\rightarrow \mathcal{A}$ ($k=1,2$) are
additive maps. Applying $\Delta$ to the equation
$(ab+ba,0)=(a,0)(b,0)+(b,0)(a,0)$ $(a,b\in \mathcal{A})$, we
deduce that $\delta_{1},\delta_{3}$ are Jordan derivations. Hence
$\delta_{1}$ and $\delta_{3}$ satisfy $(\ast)$ with
$\delta_{1}(1)=\delta_{3}(1)=0$. So by hypothesis $\delta_{1}$
and $\delta_{3}$ are derivations.
\par
Now by applying $\Delta$ to $$(0,a)(0,b)+(0,b)(0,a)=(0,0) \, \, \,
\textrm{and} \, \, \, (a,0)(0,b)+(0,b)(a,0)=(0,ab+ba)$$ for each
$a,b\in \mathcal{A}$, we get
\begin{equation}\label{16}
\delta_{2}(a)b+a\delta_{2}(b)+\delta_{2}(b)a+b\delta_{2}(a)=0,
\end{equation}
and
\begin{equation}\label{17}
\begin{split}
&\delta_{2}(ab+ba)=a\delta_{2}(b)+\delta_{2}(b)a,\\&
\delta_{4}(ab+ba)=\delta_{1}(a)b+a\delta_{4}(b)+b\delta_{1}(a)+\delta_{4}(b)a
\end{split}
\end{equation}
for all $a,b\in \mathcal{A}$. By Equation\eqref{17}, hypothesis
and Lemma~\ref{pj}, we get $\delta_{2}(a)=a\delta_{2}(1)$, for all
$a\in \mathcal{A}$, where $\delta_{2}(1)\in Z(\mathcal{A})$. Now
taking $b=1$ in Equation\eqref{16}, it follows that
$\delta_{2}(a)=-a\delta_{2}(1)$, for each $a\in \mathcal{A}$. So
$\delta_{2}(a)=0$ for all $a\in \mathcal{A}$. Define
$\varphi:\mathcal{A}\rightarrow \mathcal{A}$ by
$\varphi=\delta_{4}-\delta_{1}$. Then by Equation\eqref{17} we
get $\varphi(ab+ba)=a\varphi(b)+\varphi(b)a$ for all $a,b\in
\mathcal{A}$. Hence by Lemma~\ref{pj}, it follows that
$\varphi(a)=a\varphi(1)$ for all $a\in \mathcal{A}$, where
$\varphi(1)=\delta_{4}(1)\in Z(\mathcal{A})$ (since $\delta_{1}$
is a derivation, $\delta_{1}(1)=0$ ). Thus
$\delta_{4}(a)=\delta_{1}(a)+a\delta_{4}(1)$ for all $a\in
\mathcal{A}$, where $\delta_{4}(1)\in Z(\mathcal{A})$. By this
results it is obvious that $\Delta$ is a derivation.
\end{proof}
From Theorem~\ref{asli} and Lemma~\ref{ptri} we get the following
result.
\begin{thm}
Let $\mathcal{R}$ be a $2$-torsion free unital ring. Then every
Jordan derivation from $T(\Mn,\Mn)$ into itself is a derivation.
\end{thm}
\subsection*{Acknowledgment}
The author like to express his sincere thanks to the referees for
this paper.


\end{document}